\numberwithin{equation}{section}
\theoremstyle{plain}
\newtheorem{Prop}[equation]{Proposition}
\newtheorem{Cor}[equation]{Corollary}
\newtheorem{Lem}[equation]{Lemma}
\newtheorem{Open}[equation]{Open Problem}
\theoremstyle{definition}
\newtheorem{Def}[equation]{Definition}
\newtheorem{Exa}[equation]{Example}
\newtheorem{Rmk}[equation]{Remark}
\newenvironment{red}{\relax\color{red}}{\relax}
\newenvironment{blue}{\relax\color{blue}}{\hspace*{.5ex}\relax}
\newcommand{\ber}{\begin{red}}
\newcommand{\er}{\end{red}}
\newcommand{\beb}{\begin{blue}}
\newcommand{\eb}{\end{blue}}
\newcommand{\M}{\mathcal{M}}
\newcommand{\MA}{\mathcal{M_A}}
\newcommand{\MRed}{\mathcal{M_R}}
\begin{document}
	
\title[Mutation class poset and topology]{A topology on the poset of quiver mutation classes}

\author[T. J. Ervin]{Tucker J. Ervin}
\address{Department of Mathematics, University of Alabama,
	Tuscaloosa, AL 35487, U.S.A.}
\email{tjervin@crimson.ua.edu}
\author[B. Jackson]{Blake Jackson}
\address{Department of Mathematics, University of Connecticut,
	Storrs, CT 06269, U.S.A.}
\email{blake.jackson@uconn.edu}

\begin{abstract}
    To better understand mutation-invariant and hereditary properties of quivers (and more generally skew-symmetrizable matrices), we have constructed a topology on the set of all mutation classes of quivers which we call the mutation class topology. 
    This topology is the Alexandrov topology induced by the poset structure on the set of mutation classes of quivers from the partial order of quiver embedding.
    The closed sets of our topology---equivalently, the lower sets of the poset---are in bijective correspondence with mutation-invariant and hereditary properties of quivers.  
    We show that this space is strictly $T_0$, connected, non-Noetherian, and that every open set is dense.
    We close by providing open questions from cluster algebra theory in the setting of the mutation class topology and some directions for future research. 
\end{abstract}

\maketitle

\section{Introduction} \label{sec-intro}

Since their introduction in 2001 \cite{fomin_cluster_2001}, cluster algebras have been widely studied, including their algebraic structure \cite{fomin_cluster_2003,fomin_cluster_2007,gross_canonical_2018,lin_two_2024,muller_locally_2013,nakanishi_tropical_2012}, their quiver representations \cite{assem_elements_2006,buan_tilting_2006,caldero_quivers_2006,caldero_triangulated_2008,derksen_quivers_2008,gross_canonical_2018,schiffler_quiver_2014,speyer_acyclic_2013}, and their combinatorics \cite{beineke_cluster-cyclic_2011,cao_uniform_2019,najera_chavez_c-vectors_2012,fomin_long_2023,fomin_cluster_2008,he_geometric_2023,seven_cluster_2015,seven_cluster_2019}.
Recent advances in cluster algebra theory have attempted to understand \textit{mutation-invariant and hereditary properties} of quivers \cite{ervin_new_2024,fomin_universal_2021}---properties which are invariant under both mutation and restriction to any full subquiver. 
In the process of furthering this line of research, combining existing mutation-invariant and hereditary properties to generate new properties led us to a topology on the space of mutation classes of quivers. 

We began with observing hereditary properties through quiver embedding, which resulted in a large poset whose elements consist of mutation classes of quivers. 
In this setting, a property is mutation-invariant and hereditary if and only if it is a lower set (or down set) of the poset.
Discussing the intersections and unions of lower sets was a natural next step, leading to the construction of a topology on our poset.
The topology was the Alexandrov topology generated by a specialization (or canonical) preorder on the set of all mutation classes of quivers \cite{alexandroff_diskrete_1937,arenas_alexandroff_1999}.
The preorder is induced by quiver embedding\footnote{It can also be induced by quiver restriction \cite[Definition 2.4]{fomin_universal_2021}---which is a dual notion to embedding---and we use the terms interchangeably throughout the paper when convenient.} on the mutation classes. 
The importance of defining our topology via quiver embedding comes from the lower sets of our poset: the closed sets of this topology are, by definition, the lower sets.
This means that the set of all quivers sharing a particular mutation-invariant and hereditary property form a closed set in the topology and vice versa.
Therefore, under this topology, the study of mutation-invariant and hereditary properties is the same as the study of the closed sets of the topology, making this the most natural topology for observing mutation-invariant and hereditary properties.
All of the preliminaries for this construction can be found in Section~\ref{sec-pre}, and we define and give some basic results of the mutation class topology in Section~\ref{sec-mut}.

We also naturally extend this topology to the space of mutation classes of skew-symmetrizable $(n+m) \times (n+m)$ integral matrices.
The skew-symmetrizable space contains the quiver space as a subspace, and so the topology defined in Section~\ref{sec-mut} is the subspace topology of the Alexandrov topology on the much larger space of mutation classes of skew-symmetrizable matrices.
Therefore, we can view the poset and topology described in this paper as a poset and topology on the space of all cluster algebras of geometric type\footnote{The set of skew-symmetrizable $(n+m) \times (n+m)$ integral matrices contains the set of \textit{cluster patterns of geometric type} as a subset, and we can define the cluster algebras of geometric type from the cluster patterns of geometric type \cite{nakanishi_cluster_2023}.}.
All of the properties outlined in Section~\ref{sec-mut} can also be extended to the larger space.

At the end of Section~\ref{sec-mut}, we show that there exists a nontrivial bi-infinite chain of closed sets in the mutation class topology, meaning that our space satisfies neither the ascending or descending chain condition on closed sets.
This forces the space to be non-Noetherian.
We also show that every open set in our space is a dense set; in other words, if $S$ is an open set, the closure of $S$ is the entire space.
In Section~\ref{sec-open}, we list a few open problems stated in terms of the mutation class topology that we define in Section~\ref{sec-mut}.
Some of these questions are related to existing open questions in cluster algebra theory. 
For example, the property of a quiver being mutation-acyclic is a mutation-invariant and hereditary property \cite[Corollary 5.3]{buan_cluster_2008}.
Some have explored the combinatorial reason for this phenomenon, but to the authors' knowledge, no one has given a combinatorial proof of this result\footnote{This pursuit is what sparked our current research.}. 
Since this is a mutation-invariant and hereditary property, we can view this problem in terms of the closed sets of our topology and perhaps find a simpler approach.
Other questions in Section~\ref{sec-open} involve Banff and Louise quivers.
It is an open question as to whether or not the two sets equal one another \cite{bucher_banff_2021,ervin_answering_2023}, and Open Problems~\ref{open-closure} and~\ref{open-banff-louise} give a new approach to answer this open question through the lens of our mutation class topology and the closure operation.
As such, we believe that the study of quivers (and more generally cluster algebras of geometric type) from the point of view of the poset and/or topology of mutation classes should be an avenue that researchers consider when asking questions involving mutation classes.

\section{Preliminaries} \label{sec-pre}

For completeness, we begin with the basic definitions of quivers and posets. Throughout the paper, we use $[n] = \{1,2,\dots,n\}.$

\begin{Def} \label{def-quiver-mutation}
    A \textit{quiver} $Q$ is a finite multidigraph without loops and oriented 2-cycles.
    The vertices of a quiver are labeled with the numbers $\{1,2,\dots,n\}$, and the directed edges of $Q$ are called \textit{arrows}.
    If $k \in [n]$ is a vertex of $Q$, then the \textit{mutation of $Q$ at k} is the quiver $\mu_k(Q)$ obtained from $Q$ in the following way:
    \begin{enumerate}
        \item for each oriented 2-path $i \to k \to j$ in $Q$, add an arrow $i \to j$ in $\mu_k(Q)$,
        \item reverse the direction of all arrows incident to $k$,
        \item pairwise delete any arrows which form an oriented 2-cycle.
    \end{enumerate}
    A quiver $P$ is \textit{mutation-equivalent} to a quiver $Q$ if there is a finite sequence of vertices $[i_1, i_2, \dots, i_\ell]$ such that $P$ is isomorphic to $\mu_{i_\ell}(\mu_{i_{\ell-1}}( \dots \mu_{i_1}(Q) \dots ) )$.
    The \textit{mutation class} $[Q]$ of $Q$ is the collection of all quivers $P$ that are mutation-equivalent to $Q$.
\end{Def}

\begin{Def} \label{def-subquiver}
    Let $Q$ be a quiver on $n$ vertices and let $I \subseteq \{1,2,\dots,n\}$.
    Then the \textit{full subquiver on I} is the quiver $Q_I$ with vertices $I$ and arrows $\{i \to j \in Q \ | \ i,j \in I\}$.
    This is sometimes referred to as the \textit{restriction of $Q$ to $I$} \cite{fomin_long_2023, fomin_introduction_2021}.
\end{Def}

\begin{Def} \label{def-poset}
    A \textit{partially ordered set} or \textit{poset} is a set $X$ together with a relation $\preceq$ such that, for any $x,y,z \in X$,
    \begin{enumerate}
        \item $x \preceq x$ (reflexivity);
        \item if $x \preceq y$ and $y \preceq x$, then $x = y$ (antisymmetry); and
        \item if $x \preceq y$ and $y \preceq z$, then $x \preceq z$ (transitivity).
    \end{enumerate}
\end{Def}

Now we illustrate a poset structure on the set of all mutation classes of quivers induced by \textit{embedding}.

\begin{Def}[\textup{\cite[Definition 4.1.8]{fomin_universal_2021}}] \label{def-embedding}
    A mutation class $[P]$ \textit{embeds} into a mutation class $[Q]$ whenever there exists a $P' \in [P]$ that is isomorphic to a full subquiver of a quiver $Q' \in [Q]$.
    Equivalently, the mutation class $[Q]$ \textit{restricts} to $[P]$.
    Note that the relation of embedding is reflexive and transitive.
    Additionally, since our mutation-equivalence considers isomorphic quivers to be equivalent, it is also antisymmetric.
    We denote the partial ordering produced on the set of all mutation classes by $\preceq$, where $[P] \preceq [Q]$ whenever $[P]$ embeds into $[Q]$.
    We may drop the brackets representing the mutation class and refer to a quiver $P$ embedding into $Q$ or a quiver $Q$ restricting to $P$.
\end{Def}

By only considering mutation classes of quivers together with the relation $\preceq$, we can build the mutation class poset.
A small snippet of this poset is presented in Figure~\ref{fig-poset}. 
There are a few interesting things to note about the mutation class poset. 
The first thing to note is that it has a unique minimum element or a zero: the trivial quiver with 1 vertex and no arrows.
This quiver is the sole member of its mutation class, and it embeds into any quiver with at least one vertex. 
This fact will be useful when we prove our topological space is connected.
This poset is unbounded; in other words, given any mutation class $[Q]$, we can find another mutation class $[\widehat{Q}]$ into which $[Q]$ properly embeds.
It also has a well-defined rank function, which simply returns the number of vertices of a quiver in a mutation class.
For every rank $n$ of the poset (except the $1^{st}$), it is not too hard to see that there are countably many mutation classes with rank $n$. 
This is because the number of mutation classes of rank $n$ has the same cardinality as the set of $n \times n$ integral skew-symmetric matrices up to mutation equivalence.
Finally, this poset is not a lattice since the join and meet operations are not well defined.
It may be interesting to study this poset or finite subposets in its own right.
For example, observing rowmotion \cite{panyushev_orbits_2009,propp_homomesy_2015,striker_promotion_2012} may be worthwhile since it acts on the order ideals of a poset.

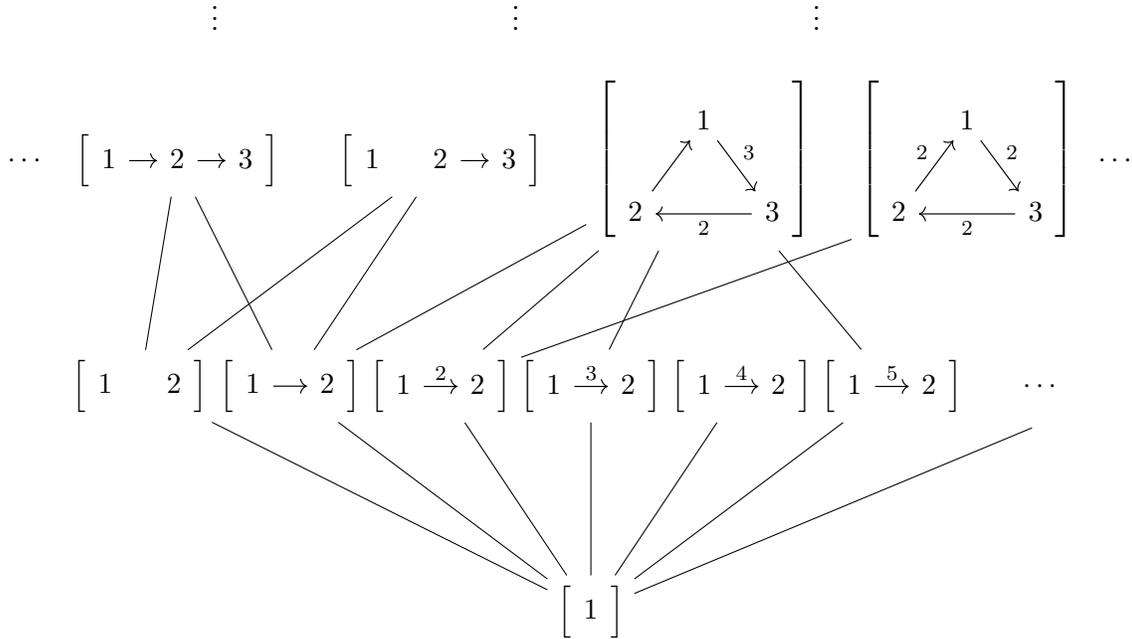
\begin{figure}
    \centering
    \begin{adjustbox}{max width=\textwidth}
        \begin{tikzpicture}
            \node (31) at (-4,5) {$\vdots$};
            \node (32) at (0,5) {$\vdots$};
            \node (33) at (4,5) {$\vdots$};
          \node (21) at (-6.5,3) {$\cdots$}; 
          \node (22) at (-4.5,3) {$\left[\begin{tikzcd}[column sep = 1em]
                                    1 & 2 & 3
                                    \arrow[from=1-1, to=1-2]
                                    \arrow[from=1-2, to=1-3]
                                \end{tikzcd}\right]$}; 
        \node (225) at (-1,3) {$\left[\begin{tikzcd}[column sep = 1em]
                                    1 & 2 & 3
                                    \arrow[from=1-2, to=1-3]
                                \end{tikzcd}\right]$}; 
          \node (23) at (2.5,3) {$\left[\begin{tikzcd}[column sep = 1em]
                                    & 1 & \\
                                    2 & & 3
                                    \arrow[from=1-2, to=2-3, "3"]
                                    \arrow[from=2-3, to=2-1, "2"]
                                    \arrow[from=2-1, to=1-2]
                                \end{tikzcd}\right]$};
        \node (24) at (6,3) {$\left[\begin{tikzcd}[column sep = 1em]
                                    & 1 & \\
                                    2 & & 3
                                    \arrow[from=1-2, to=2-3, "2"]
                                    \arrow[from=2-3, to=2-1, "2"]
                                    \arrow[from=2-1, to=1-2, "2"]
                                \end{tikzcd}\right]$};
        \node (245) at (4.6,2) {};
        \node (25) at (8,3) {$\cdots$};
          \node (11) at (-5,0) {$\left[\begin{tikzcd}[column sep = 1em]
                                     1 & 2 
                                \end{tikzcd}\right]$};
          \node (12) at (-3,0) {$\left[\begin{tikzcd}[column sep = 1.2em]
                                    1 & 2 
                                    \arrow[from=1-1, to=1-2]
                                \end{tikzcd}\right]$};
          \node (13) at (-1,0) {$\left[\begin{tikzcd}[column sep = 1.2em]
                                    1 & 2 
                                    \arrow[from=1-1, to=1-2, "2"]
                                \end{tikzcd}\right]$};
          \node (14) at (1,0) {$\left[\begin{tikzcd}[column sep = 1.2em]
                                    1 & 2 
                                    \arrow[from=1-1, to=1-2, "3"]
                                \end{tikzcd}\right]$};
          \node (15) at (3,0) {$\left[\begin{tikzcd}[column sep = 1.2em]
                                    1 & 2 
                                    \arrow[from=1-1, to=1-2, "4"]
                                \end{tikzcd}\right]$};
          \node (16) at (5,0) {$\left[\begin{tikzcd}[column sep = 1.2em]
                                    1 & 2 
                                    \arrow[from=1-1, to=1-2, "5"]
                                \end{tikzcd}\right]$};
          \node (17) at (7,0) {$\cdots$};
          \node (18) at (7,-0.5) {};
          \node (zero) at (1,-3) {$\left[\begin{tikzcd}
                                    1
                                \end{tikzcd}\right]$};
          \draw (zero) -- (11) -- (22) -- (12) -- (zero) -- (13) -- (23) -- (14) -- (zero) -- (15);
          \draw (13) -- (245);
          \draw (zero) -- (16) -- (23);
          \draw (zero) -- (18);
          \draw (12) -- (23);
          \draw (11) -- (225) -- (12);
        \end{tikzpicture}
    \end{adjustbox}
    \caption{A snippet of the quiver mutation class poset}
    \label{fig-poset}
\end{figure}

Next, we define mutation-invariant and hereditary properties.  
These quiver properties are, in short, inherited by the ``children'' of the quiver.
Moreover, the mutation-invariance makes these properties of not just quivers, but mutation classes of quivers.
While the research on some of these properties goes back quite a long way (for example, the existence of a reddening sequence is mutation-invariant and hereditary), there are only a few currently of interest.
We hope that the current paper will highlight the importance of considering multiple mutation-invariant and hereditary properties simultaneously, and not in isolation. 
For a more formal treatment of these properties, we direct the reader to \cite{ervin_new_2024,fomin_universal_2021}.

\begin{Def} \cite[Defintion 4.1.3]{fomin_introduction_2021}
    We say that a property $\mathcal{F}$ on quivers is a \textit{mutation-invariant and hereditary property} if, for all $Q$ having property $\mathcal{F}$,
    \begin{enumerate}
        \item $P \in [Q]$ has property $\mathcal{F}$ and
        \item $P' \in [P_I]$ has property $\mathcal{F}$ for all subquivers $P_I$ of $P$.
    \end{enumerate}
    Therefore, a mutation-invariant and hereditary property $\mathcal{F}$ is preserved under the embedding operation of Definition~\ref{def-embedding}.
    In other words, if $[Q]$ has a property $\mathcal{F}$ and $[P] \preceq [Q]$, then $[P]$ has property $\mathcal{F}$.
\end{Def}

\begin{Exa}
    The easiest mutation-invariant and hereditary property to describe is the property of embedding into $[Q]$ for some quiver $Q$. 
    In Figure~\ref{fig-poset}, we can see the $A_3$ quiver in the upper-left-hand portion of the figure. 
    The property of embedding into $[A_3]$ is a mutation-invariant and hereditary property, and the set of mutation classes with this mutation-invariant and hereditary property is
    $$S = \{[A_3], [A_2], [1], [\begin{tikzcd}[column sep = 1em]
                                     1 & 2 
                                \end{tikzcd}]\}$$
\end{Exa}

\begin{Exa} 
    We denote the set of all mutation-acyclic quivers as $\MA$ and the set of all quivers with a reddening sequence as $\MRed$.
    Both of these are mutation-invariant and hereditary properties \cite{buan_cluster_2008, muller_existence_2016}, and we will reference them throughout the paper.
\end{Exa}

For completeness, we also include the definition of a topology and the Alexandrov topology. 

\begin{Def}[\cite{munkres_topology_2019}] \label{def-topology}
    A \textit{topology} on a set $X$ is a collection $\mathcal{T}$ of subsets of $X$ satisfying the following conditions:
    \begin{enumerate}
        \item Both the empty set and $X$ belong to $\mathcal{T}$;

        \item Arbitrary intersections of sets in $\mathcal{T}$ belong to $\mathcal{T}$;

        \item Finite unions of sets in $\mathcal{T}$ belong to $\mathcal{T}$.
    \end{enumerate}
    The sets in $\mathcal{T}$ are said to be the \textit{closed sets}, and the pair $(X,\mathcal{T})$ is called a \textit{topological space}.
\end{Def}

\begin{Rmk}
    The usual definition of a topology on $X$ is formulated in terms of the open sets. 
    However, our focus is on closed sets, so we use the alternative definition.
\end{Rmk}

\begin{Def}[\cite{alexandroff_diskrete_1937,arenas_alexandroff_1999}] \label{def-alexandrov}
    Given a set $X$ with a partial order $\preceq$, we may form a topology $\mathcal{T}$ on $X$ by taking the set of closed sets to be
    $$\{ S \subseteq X \ | \ \forall x,y \in X, \text{ if } x \in S\text{ and }y \preceq x \implies y \in S \}. $$
    Moreover, the open sets of $X$ are given by
    \[ \{ U \subseteq X \ | \ \forall x,y \in X, \text{ if } y \in U \text{ and } y \preceq x \implies x \in U \}. \]
    The closed sets $S$ are the lower sets or the down sets of the poset, and the open sets $U$ are upper sets or upsets of the poset.
    The resulting topology $\mathcal{T}$ is called the \textit{Alexandrov topology} and $(X,\mathcal{T})$ is called an \textit{Alexandrov-discrete space} (sometimes just an \textit{Alexandrov space}).
\end{Def}

\section{The mutation class topology} \label{sec-mut}

\begin{Def} \label{def-mutation-topology}
    Let $\M$ be the set of all mutation classes of quivers up to isomorphism.
    The \textit{mutation class topology} on $\M$ is the Alexandrov topology induced by $\preceq$. In other words, the closed sets of $\M$ are the lower sets of the poset described in Section~\ref{sec-pre}: the sets $S$ such that $[Q] \in S$ implies that $[P] \in S$ for all $[P] \preceq [Q]$.
\end{Def}

Again, the benefit of constructing this topology on $\M$ is that the closed sets are in bijective correspondence with mutation-invariant and hereditary properties of quivers.
Any property that can be had by mutation classes is naturally a mutation-invariant property of quivers.
The hereditary properties of quivers are passed to their full subquivers.
Hence, mutation-invariant and hereditary properties are exactly the properties that are preserved by embedding into a mutation class with that property. 
This forces the mutation-invariant and hereditary properties of quivers to correspond exactly with closed sets in $\M$.

\subsection{Properties arising from Alexandrov topologies} \label{subsec-alexandrov}

We start this subsection with basic results about the space; namely the separation axioms it satisfies. 

\begin{Def}[\cite{schechter_handbook_1996}] \label{def-separation}
    A topological space $X$ is $T_0$ or \textit{Kolmogorov} if any two points of $X$ are topologically distinguishable, i.e., for any two distinct points there is a closed set containing one and not the other. A space is $R_0$ or \textit{symmetric} if any two topologically distinguishable points in $X$ are separated; in other words, the specialization preorder on the points of $X$ is a symmetric relation (an equivalence relation). A space is $T_1$ if it is both $T_0$ and $R_0$. 
\end{Def}

The first thing to note is that every Alexandrov-discrete space is $T_0$, and some of them are also $T_1$.
Therefore, we can be assured that $\M$ is $T_0$.
On the other hand, $\M$ is not $R_0$ since our specialization preorder is not symmetric \cite[Example 16.8.e]{schechter_handbook_1996} (in fact, it is antisymmetric), therefore it cannot be $R_0$ or $T_1$. 
Since our space is not $R_0$ or $T_1$, it fails to meet any other separation axioms---for example, regular spaces must satisfy $R_1$ (and therefore $R_0$) and Hausdorff and normal spaces must satisfy $T_1$.

We were able to find a few non-standard properties.
It is known that Alexandrov-discrete spaces are locally path-connected~\cite{arenas_alexandroff_1999}, but the connectedness of $\M$ is not guaranteed. 

\begin{Prop} \label{prop-clopen}
    The only subsets of $\M$ that are both open and closed are the empty set and $\M$.
    Thus, $\M$ is connected.
\end{Prop}

\begin{proof}
    Assume we have a non-empty clopen set $V \subseteq \M$.
    Since $V$ is closed, it must be a down set in the poset, and therefore it must contain the mutation class of the trivial quiver, as the trivial quiver embeds into every mutation class.
    Since $V$ is open, its complement $U$ in $\M$ is also closed, meaning $U$ either contains the mutation class of the trivial quiver or is empty.
    Since $U \cap V = \emptyset$, we must have that $U = \emptyset$.
    As such, we have $V = \M$.
\end{proof}

Additionally, we can show that $\M$ is a (trivially) compact space, again using the idea that our poset contains a minimal element.

\begin{Def}
    An \textit{open cover} of a space $X$ is a collection of open sets $\{U_i\}_{i \in \mathcal{I}}$ such that \[ \bigcup_{i \in \mathcal{I}}U_i = X. \] A topological space $X$ is \textit{compact} if every open cover of $X$ admits a finite subcover; in other words, $\mathcal{I}$ can be taken to be a finite set.
\end{Def}

\begin{Prop} \label{prop-compact}
    $\M$ is compact.
\end{Prop}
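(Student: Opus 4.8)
The plan is to exploit exactly the same feature of the poset that powered the proof of Proposition~\ref{prop-clopen}: the existence of a unique minimum element, the mutation class $[1]$ of the trivial quiver, which embeds into every mutation class. Since $\M$ carries the Alexandrov topology, its open sets are precisely the upper sets of the poset, so the strategy is to show that the trivial quiver is a ``generic point'' in the sense that the only open set containing it is the whole space.

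First I would observe that if $U$ is any open set with $[1] \in U$, then because $U$ is an upper set and $[1] \preceq [Q]$ for every mutation class $[Q] \in \M$, we must have $[Q] \in U$ for all $[Q]$; hence $U = \M$. This single lemma-like observation is the entire engine of the argument.

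Next, given an arbitrary open cover $\{U_i\}_{i \in \mathcal{I}}$ of $\M$, I would note that since the cover exhausts $\M$ and in particular $[1] \in \M$, there is some index $i_0 \in \mathcal{I}$ with $[1] \in U_{i_0}$. By the preceding observation, $U_{i_0} = \M$, so $\{U_{i_0}\}$ is already a finite (indeed singleton) subcover. This establishes compactness.

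There is no genuine obstacle here, which is why the statement is flagged as ``trivially'' compact: the work is entirely front-loaded into recognizing that the minimum element forces every open neighborhood of it to be the full space. The only point requiring care is keeping straight that in the Alexandrov topology the open sets are the \emph{upper} sets (Definition~\ref{def-alexandrov}), dual to the closed lower sets used in Proposition~\ref{prop-clopen}; one should verify the upper-set condition is applied in the correct direction so that $[1]$ being below everything propagates membership upward to all of $\M$.
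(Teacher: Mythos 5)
Your proposal is correct and uses the same essential idea as the paper: the minimum element $[1]$ must lie in some member of the cover, and any open set containing it is all of $\M$. The only cosmetic difference is that you argue directly via the upper-set property while the paper passes to the closed complement, which must be empty since every nonempty closed set contains $[1]$.
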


\begin{proof}
    Let $\{U_i\}_{i \in \mathcal{I}}$ be any collection of open sets such that $\M = \bigcup_{i \in \mathcal{I}} U_i$.
    This implies that the mutation class of the trivial quiver is contained in some $U_i$.
    As such, the complement of $U_i$ is a closed set that does not contain the mutation class of the trivial quiver, making it an empty set.
    Thus $U_i = \M$ for some $i \in \mathcal{I}$.
    This means that any open cover of $\M$ can be reduced to a finite subcover, forcing $\M$ to be compact.
\end{proof}

We also prove that every open subset of $\M$ is dense, by first noting that every pair of mutation classes embeds into at least one quiver.

\begin{Def}
    If $(X,\mathcal{T})$ is a topological space and $A \subseteq X$ is a subset of $X$, then the \textit{closure} of $A$, denoted by $\overline{A}$, is the intersection of all the closed sets of $X$ containing $A$. Moreover, $A$ is a \textit{dense} subset of $X$ if $\overline{A} = X$.
\end{Def}

\begin{Rmk} \label{rmk-closure}
    In the mutation class topology, $\overline{A}$ is the lower set generated by $A$ in $\M$. Specifically, \[\overline{A} = \{ [Q] \in \M \ | \ [Q] \preceq [P] \text{ for some } [P] \in A \}. \]
\end{Rmk}

\begin{Prop} \label{prop-open-dense}
    Every nonempty open subset of $\M$ is dense.
\end{Prop}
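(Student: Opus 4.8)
The plan is to combine the explicit description of closure in Remark~\ref{rmk-closure} with the observation, flagged just before the statement, that the mutation class poset is upward-directed. First I would establish this directedness: any two mutation classes $[P]$ and $[Q]$ have a common upper bound under $\preceq$. Given representatives $P$ on vertices $\{1,\dots,m\}$ and $Q$ on vertices $\{1,\dots,k\}$, relabel the vertices of $Q$ to $\{m+1,\dots,m+k\}$ and form the disjoint union $P \sqcup Q$, placing the two quivers side by side with no arrows between them. Introducing no new arrows creates neither loops nor oriented $2$-cycles, so $P \sqcup Q$ is a genuine quiver in the sense of Definition~\ref{def-quiver-mutation}. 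Moreover $P$ is precisely the full subquiver on $\{1,\dots,m\}$ and $Q$ is (isomorphic to) the full subquiver on $\{m+1,\dots,m+k\}$, so $[P] \preceq [P \sqcup Q]$ and $[Q] \preceq [P \sqcup Q]$, giving the desired common upper bound.

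With this in hand, let $U$ be any nonempty open set and fix some $[P] \in U$. To prove $\overline{U} = \M$, I would take an arbitrary $[Q] \in \M$ and exhibit an element of $U$ lying above it. Set $[R] = [P \sqcup Q]$. The directedness observation gives $[P] \preceq [R]$, and since $U$ is open it is an upper set, so $[P] \in U$ forces $[R] \in U$. At the same time $[Q] \preceq [R]$ with $[R] \in U$, so the description of closure in Remark~\ref{rmk-closure} yields $[Q] \in \overline{U}$. As $[Q]$ was arbitrary, $\overline{U} = \M$, i.e.\ $U$ is dense.

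I do not expect a substantive obstacle here: the whole argument reduces to recognizing that disjoint union supplies common upper bounds, which is exactly what makes every nonempty upper set cofinal in the poset, and cofinal sets generate the whole lower-set closure. The only point requiring a moment of care is checking that the disjoint union is a legitimate quiver and that both summands sit inside it as \emph{full} subquivers, but both are immediate from the construction once the vertex relabeling is recorded. An alternative phrasing avoids choosing a fixed $[P]$ altogether, but fixing one representative keeps the bookkeeping minimal and makes the use of the upper-set property of $U$ transparent.
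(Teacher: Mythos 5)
Your proposal is correct and follows essentially the same route as the paper: form the disjoint union $[P \sqcup Q]$ as a common upper bound, use the upper-set property of $U$ to place it in $U$, and invoke Remark~\ref{rmk-closure} to conclude $[Q] \in \overline{U}$. The only difference is cosmetic (you fix the element of $U$ and vary the outside class, while the paper does the reverse), plus you spell out the well-definedness checks the paper only gestures at.
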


\begin{proof}
    Let $U$ be any nonempty open subset of $\M$ and $[P]$ be any mutation class not contained in $U$.
    For any $[Q] \in U$, we may form the mutation class of the disjoint union of $[P]$ and $[Q]$: $[P] \sqcup [Q] = [P \sqcup Q]$.
    Note that this is a well-defined product of mutation classes since mutations in one connected component of a quiver do not affect other connected components.
    Both $[P]$ and $[Q]$ naturally embed into $[P \sqcup Q]$, and we know that $[P \sqcup Q] \in U$ since $U$ is an upper set of our poset.
    The closure of $U$ then includes the quiver $[P]$ by Remark~\ref{rmk-closure}.
    As $[P]$ was arbitrary, this demonstrates that the closure of $U$ is the whole space $\M$, making $U$ dense. 
\end{proof}

Finally, since the poset is ranked by the number of vertices in the quiver, there are no finite dense subsets of $\M$.
The closure of any such subset would not include any quivers with sufficiently large rank.

\begin{Cor} \label{cor-finite-dense}
    Every dense subset of $\M$ is an infinite set.
\end{Cor}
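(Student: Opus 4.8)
The plan is to prove the contrapositive: a finite subset of $\M$ is never dense. The engine behind this is that the embedding order $\preceq$ is monotone with respect to the rank function, where the rank of a mutation class is the number of vertices of any one of its members (mutation preserves the vertex count, so this is well defined).

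First I would record the rank-monotonicity of $\preceq$. If $[P] \preceq [Q]$, then by Definition~\ref{def-embedding} some $P' \in [P]$ is isomorphic to a full subquiver $Q'_I$ of some $Q' \in [Q]$. A full subquiver on $I$ has exactly $|I|$ vertices, and since mutation is rank-preserving, the rank of $[P]$ equals $|I|$, which is at most the number of vertices of $Q'$, that is, the rank of $[Q]$. Hence $[P] \preceq [Q]$ forces $\operatorname{rank}([P]) \le \operatorname{rank}([Q])$.

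Next, let $A = \{[Q_1], \dots, [Q_k]\}$ be an arbitrary finite subset of $\M$ and set $N = \max_{1 \le i \le k} \operatorname{rank}([Q_i])$. By Remark~\ref{rmk-closure}, every element of $\overline{A}$ lies below some $[Q_i]$, so by the monotonicity just established, every mutation class in $\overline{A}$ has rank at most $N$. I would then invoke the unboundedness of the poset observed in Section~\ref{sec-pre}: for each $n$ the edgeless quiver on $n$ vertices (being alone in its mutation class) furnishes a mutation class of rank exactly $n$, so $\M$ contains classes of arbitrarily large rank. In particular a mutation class of rank $N+1$ cannot belong to $\overline{A}$, whence $\overline{A} \ne \M$ and $A$ is not dense. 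Taking the contrapositive gives the corollary.

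I do not anticipate a genuine obstacle here; the proof is essentially a rank-counting argument. The only point needing a line of care is the rank-monotonicity of embedding, and even that is immediate once one observes that mutation preserves rank while passing to a full subquiver cannot increase it.
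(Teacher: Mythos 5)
Your argument is correct and is essentially the same as the paper's: the paper also observes that the poset is ranked by vertex count and that the closure of a finite set cannot contain mutation classes of sufficiently large rank. You have merely spelled out the rank-monotonicity of $\preceq$ that the paper leaves implicit.
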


There are also properties of Alexandrov-discrete spaces that are known which we restate for our space.
Arenas \cite{arenas_alexandroff_1999} showed that subspaces of Alexandrov-discrete spaces are again Alexandrov-discrete.
This means that if we want to consider, say, quivers with at most $n$ vertices, $\MA$, or $\MRed$ as spaces of mutation classes, then these spaces are again Alexandrov-discrete under the induced subspace topology; therefore, we will again have mutation-invariant and hereditary properties corresponding to closed sets.
This property will prove itself useful again in Section~\ref{sec-iced}, where we show that $\M$ is a subspace of a more general mutation class space.
The second property is that quotient spaces of Alexandrov-discrete spaces are Alexandrov-discrete spaces. 
Therefore, if we were to introduce an equivalence relation $\sim$ on $\M$, then $\M/{\sim}$ would again be Alexandrov-discrete. 
The third property is that finite products of Alexandrov-discrete spaces are Alexandrov-discrete.
This means one could study the space $\M^n$ consisting of points which are $n$-tuples of mutation classes $([Q_1], [Q_2], \dots, [Q_n])$ and still enjoy the same results that we have in this paper (with the proper adjustments to the statements).
This opens the possibility to study some interesting continuous functions, such as the product map. If we have a well-defined notion of ``products'' of mutation classes of quivers, say $[Q] \times [P]$, then this operation is a continuous map from $\M^2 \to \M$.
Since both spaces are Alexandrov-discrete spaces, there might be some interesting properties of this map.
We don't have any examples of a well-defined directed graph product that respects mutation classes aside from the disjoint union of two mutation classes.

\subsection{Applications involving mutation-invariant and hereditary properties} \label{subsec-applications}

As one might expect, the mutation-finite quivers are exceptions to the usual closed sets in the mutation class topology.

\begin{Lem} \label{lem-finite-hereditary}
    Let $S \neq \emptyset$ be any finite closed subset of $\M$.
    Then every $[P] \in S$ is mutation-finite, i.e., there are only finitely many quivers in $[P]$.
\end{Lem}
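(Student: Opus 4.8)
The plan is to argue the contrapositive: I will show that if a mutation class $[P]$ is mutation-infinite, then \emph{any} closed set containing $[P]$ must be infinite, which gives the lemma at once since $S$ is finite. So suppose $[P] \in S$ with $[P]$ mutation-infinite. Because $S$ is closed, it is a lower set, and hence contains every mutation class $[R]$ with $[R] \preceq [P]$; my goal is to exhibit infinitely many such $[R]$.

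The key observation is that rank (the number of vertices) is constant on a mutation class, say equal to $n$. A quiver on $n$ labelled vertices is recorded by its skew-symmetric exchange matrix $B = (b_{ij})$ with integer entries, and two such quivers are isomorphic exactly when their matrices agree after a simultaneous permutation of rows and columns. If every entry satisfied $|b_{ij}| \le M$ for one fixed bound $M$, there would be at most $(2M+1)^{\binom{n}{2}}$ possible matrices and hence finitely many quivers up to isomorphism---contradicting that $[P]$ is mutation-infinite. So the entries are unbounded over the members of $[P]$, and since there are only $\binom{n}{2}$ pairs of vertices, some fixed pair $\{i,j\}$ carries arbitrarily many arrows as $P'$ ranges over $[P]$.

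Next I pass to full subquivers. For each $P' \in [P]$ the full subquiver $P'_{\{i,j\}}$ is a two-vertex quiver, and since quivers have no oriented $2$-cycles all of its arrows point the same way, so it is determined by the single nonnegative integer $|b_{ij}|$. Distinct arrow counts give nonisomorphic two-vertex quivers, and each two-vertex quiver is its own mutation class up to isomorphism, since mutating at either vertex merely reverses the lone bundle of arrows. Thus the pair $\{i,j\}$ yields infinitely many pairwise distinct two-vertex mutation classes $[R]$; each embeds into $[P]$ by definition (a representative is a full subquiver of some $P' \in [P]$), so each satisfies $[R] \preceq [P]$ and therefore lies in $S$. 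This contradicts $|S| < \infty$, proving that every $[P] \in S$ is mutation-finite.

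I expect the main obstacle to be the second paragraph: one must confirm that mutation-infiniteness, with the rank held fixed, genuinely forces the exchange-matrix entries to blow up, and---more delicately---that this blow-up is \emph{visible} in two-vertex full subquivers rather than hidden in higher-order interactions among three or more vertices. Both points are elementary once one records that rank is a mutation invariant and that the arrows between a fixed pair of vertices are exactly the corresponding exchange-matrix entry; the final check that distinct two-vertex quivers occupy distinct mutation classes is then immediate.
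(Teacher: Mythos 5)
Your proof is correct and follows essentially the same route as the paper: the paper's argument is exactly that a mutation-infinite class admits infinitely many two-vertex quivers as restrictions, forcing any closed set containing it to be infinite. The only difference is that you justify in detail the step the paper simply asserts (that unbounded exchange-matrix entries produce infinitely many distinct two-vertex full subquivers), which is a welcome elaboration rather than a departure.
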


\begin{proof}
    Suppose that there exists a $[P] \in S$ that is not mutation-finite.
    Then an infinite number of quivers on two vertices embed into $[P]$.
    As $S$ is closed, this would force $S$ to be an infinite set, a contradiction of our assumption.
    Hence, every $[P] \in S$ is mutation-finite.
\end{proof}

Additionally, as we are working with closed sets, we can discuss mutation-invariant and hereditary properties in terms of their open complements.

\begin{Def} \label{def-quiver-avoiding}
    Let $P$ be a quiver.
    We say that a quiver $Q$ or mutation class $[Q]$ is $[P]$-avoiding whenever $[P]$ does not embed into $[Q]$.
    Similarly, if $S$ is a set of quivers, we say that a quiver $Q$ is $S$-avoiding whenever $Q$ is $[P]$-avoiding for every $[P] \in S$.
\end{Def}

\begin{Lem} \label{lem-quiver-avoiding}
    Let $N_S$ be the set of all $S$-avoiding mutation classes and $O_S$ be its complement.
    Then $O_S$ is the open set generated by $S$, and $N_S$ is closed.
\end{Lem}

\begin{proof}
    Let $Q$ be some quiver such that $[P] \preceq [Q]$ for any $[P] \in S$.
    Then $Q$ is not $S$-avoiding, forcing $[Q] \in O_S$.
    This means that $O_S$ is the upper set generated by the elements of $S$.
    As upper sets are open in $\M$, we have our desired result. 
\end{proof}

This has already been used informally.
For example, we can partition the set of all mutation classes into mutation-acyclic classes, $\MA$, and non-mutation-acyclic classes.
This has been explored most thoroughly in quivers of rank 3 with regards to the Markov constant \cite{beineke_cluster-cyclic_2011}.
Similarly, the mutation-finite quivers form a closed subset of $\M$, and the corresponding complement of mutation-infinite quivers is open.
This partitioning can be done for any mutation-invariant and hereditary property, and its open complement $O_S$ has already been partially explored with universal collections.

\begin{Def}[\cite{fomin_universal_2021}] \label{def-universal-col}
    A \textit{universal collection} is a proper subset of $\M$ such that every quiver embeds into some quiver of the set.
    Hence, a subset of $\M$ is dense if and only if it is a universal collection.
\end{Def}

\begin{Def} \cite[Defintion 2.6]{fomin_universal_2021}
    Let $n$ be an integer greater than 1.
    Then a quiver $Q$ is $n$-universal if every quiver on $n$ or less vertices embeds into $Q$.
\end{Def}

The first example of a universal collection is any collection of $n$-universal quivers where every $n > 1$ is represented \cite[Remark 5.6]{fomin_universal_2021}.
As discussed previously, any mutation-invariant and hereditary property gives us a corresponding open subset---a dense set and a universal collection.
Thus, open sets in the mutation class topology are universal collections.

\begin{Cor} \label{cor-universal-collections}
    Let $\mathcal{H}$ be any mutation-invariant and hereditary property.
    Then $\mathcal{H} = N_S$ for at least one subset $S \subseteq \M$, where $O_S$ is the corresponding universal collection that is also an open set.
    Additionally, if $S$ is any universal collection (open or otherwise), then $N_S$ is the corresponding mutation-invariant and hereditary property and $O_S$ is the open set generated by $S$.
\end{Cor}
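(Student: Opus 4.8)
The plan is to assemble previously established facts rather than build anything new: the bijective correspondence between closed sets (lower sets) and mutation-invariant and hereditary properties noted after Definition~\ref{def-mutation-topology}, together with Lemma~\ref{lem-quiver-avoiding}, Proposition~\ref{prop-open-dense}, and Definition~\ref{def-universal-col}. Throughout I would identify the property $\mathcal{H}$ with the lower set of mutation classes possessing it, which is exactly what that correspondence licenses. The two assertions then become, respectively, a statement about producing a generating set for a given open complement, and a direct reading of Lemma~\ref{lem-quiver-avoiding}.

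For the first assertion I would exhibit an explicit witness. Take $S := \M \setminus \mathcal{H}$, the complement of the closed set $\mathcal{H}$; since $\mathcal{H}$ is a lower set, $S$ is an upper set and hence open in the Alexandrov topology. By Lemma~\ref{lem-quiver-avoiding}, $O_S$ is the open (upper) set generated by $S$, but $S$ is already open, so $O_S = S$ and therefore $N_S = \M \setminus O_S = \mathcal{H}$, giving $\mathcal{H} = N_S$. To identify $O_S$ as a universal collection, I would observe that whenever $\mathcal{H}$ is a nontrivial property, $O_S = S$ is a nonempty proper open subset; Proposition~\ref{prop-open-dense} then makes it dense, and Definition~\ref{def-universal-col} identifies a proper dense subset as a universal collection. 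The clause ``at least one $S$'' is explained by noting that any subset generating the upper set $S$ serves equally well, so the witness is far from unique.

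For the second assertion, let $S$ be an arbitrary universal collection, which is dense by Definition~\ref{def-universal-col}. Here Lemma~\ref{lem-quiver-avoiding} does essentially all the work: it states directly that $O_S$ is the open set generated by $S$ and that $N_S$ is closed. Since $N_S$ is a closed set, i.e.\ a lower set, the correspondence of Definition~\ref{def-mutation-topology} identifies it with a mutation-invariant and hereditary property, which is the desired conclusion. Notably the universality hypothesis on $S$ is not needed for $N_S$ to be closed or for $O_S$ to be the generated open set; it only serves to pin down $S$ as a dense, proper subset and thereby frame the correspondence.

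The only point genuinely requiring care is not an obstacle but a bookkeeping issue in the first assertion: the degenerate properties. If $\mathcal{H} = \M$ then $O_S = \emptyset$, and if $\mathcal{H} = \emptyset$ then $O_S = \M$; in both cases the equality $\mathcal{H} = N_S$ still holds, yet $O_S$ fails to be a proper nonempty (hence dense) subset and so is not a universal collection. I would therefore state the universal-collection half of the conclusion under the implicitly intended hypothesis that $\mathcal{H}$ is nontrivial, equivalently that its associated open set is nonempty and proper, and flag that the identity $\mathcal{H} = N_S$ itself survives the degenerate cases unharmed.
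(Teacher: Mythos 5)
Your argument is correct and matches the paper's intent: the corollary is stated without proof as an immediate consequence of Lemma~\ref{lem-quiver-avoiding}, the closed-set/hereditary-property correspondence following Definition~\ref{def-mutation-topology}, and Proposition~\ref{prop-open-dense}, which is exactly what you assemble, with the natural witness $S = \M \setminus \mathcal{H}$ (an upper set, so $O_S = S$ and $N_S = \mathcal{H}$). Your flag that the universal-collection clause fails for the degenerate properties $\mathcal{H} = \emptyset$ and $\mathcal{H} = \M$ (since Definition~\ref{def-universal-col} requires a proper, nonempty, dense subset) is a genuine edge case the paper leaves implicit, and is worth recording.
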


We end by demonstrating how translating between closed sets and mutation-invariant and hereditary properties can lead to new examples of both.

\begin{Rmk} \label{rmk-mutation-N-abundant}
    Let $C_N$ be the set of all mutation classes of quivers that are mutation $N$-abundant, i.e. there are $N$ or more arrows between any pair of vertices in any quiver of the mutation class.
    When you note that the single vertex quiver is trivially mutation $N$-abundant for all $N$, we have a natural mutation-invariant and hereditary property.
    The sets $C_N$ are then closed for all $N$.
    Furthermore, it can be easily seen that 
    $$\cdots C_3 \subset C_2 \subset C_1$$
    with $C_{N} \neq C_{N+1}$.
    If we re-imagine $C_N$ as a set of quivers avoiding some set $S$, then we quickly find that $S$ is the set of all rank two quivers with $N-1$ or less arrows between the two vertices.
    The open set $O_S$, the set of all mutation classes that restrict to a mutation class in $S$, is the corresponding universal collection.
\end{Rmk}

Having produced new closed and open sets from a mutation-invariant and hereditary property, we now go the other direction.

\begin{Rmk} \label{rmk-isolated-avoiding}
    Let $E_N$ be the set of all $[I_{N+1}]$-avoiding mutation classes, where $I_{N+1}$ is the quiver with no arrows on $N+1$ vertices (the isolated quiver).
    Then each set $E_N$ is closed by Lemma \ref{lem-quiver-avoiding}, and we have an infinite chain of closed subsets
    $$E_1 \subset E_2 \subset E_3 \subset \cdots $$
    with $E_N \neq E_{N+1}$.
    The open set generated by $[I_{N+1}]$ is the corresponding universal collection of $E_N$.
\end{Rmk}

We can then use the previous two families of closed sets to say something about the ascending and descending chain conditions on $\M$.

\begin{Def} \label{def-noetherian}
    A topological space $(X,\mathcal{T})$ is \textit{Noetherian} if it satisfies the \textit{descending chain condition} on closed sets: for any sequence \[ C_1 \supseteq C_2 \supseteq \cdots\]  of closed sets, there exists a positive integer $k$ such that $C_k = C_{k+1} = \cdots.$
\end{Def}

\begin{Prop} \label{cor-acc-dcc}
    As $E_1$ and $C_1$ avoid the same set of quivers, there exists a bi-infinite chain of closed sets of $\M$:
    $$\cdots \subset C_3 \subset C_2 \subset C_1 = E_1 \subset E_2 \subset E_3 \subset \cdots$$
    This shows that $\M$ fails both the ascending chain condition and the descending chain condition on closed sets.
    Hence, the mutation class topology is non-Noetherian.
\end{Prop}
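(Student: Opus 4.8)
The plan is to locate a single closed set that belongs to both families constructed in Remarks~\ref{rmk-mutation-N-abundant} and~\ref{rmk-isolated-avoiding}, and then to splice the strictly descending family around it with the strictly ascending family to produce one bi-infinite chain. The substantive step is the identity $C_1 = E_1$. First I would unwind both defining conditions in terms of pairs of vertices: a mutation class $[Q]$ lies in $C_1$ precisely when every quiver in $[Q]$ has at least one arrow between each pair of vertices (mutation $1$-abundance), while $[Q]$ lies in $E_1$ precisely when $[I_2]$ does not embed into $[Q]$, i.e.\ no quiver mutation-equivalent to $Q$ contains two vertices joined by no arrow. These two conditions are negations of the same assertion, so $C_1 = E_1$; this is exactly the statement that both sets avoid the single rank-two arrowless quiver $I_2$.

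Next I would assemble the chain. Remark~\ref{rmk-mutation-N-abundant} supplies the strictly descending family of closed sets $\cdots \subset C_3 \subset C_2 \subset C_1$, and Remark~\ref{rmk-isolated-avoiding} supplies the strictly ascending family of closed sets $E_1 \subset E_2 \subset E_3 \subset \cdots$. Gluing these at the common term $C_1 = E_1$ gives
\[ \cdots \subset C_3 \subset C_2 \subset C_1 = E_1 \subset E_2 \subset E_3 \subset \cdots, \]
a bi-infinite chain whose terms are all closed in $\M$ and whose inclusions are all strict.

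Finally I would read off the chain conditions directly from the two tails. The ascending tail $E_1 \subset E_2 \subset \cdots$ has a strict inclusion at every stage, so no index stabilizes it and the ascending chain condition fails; likewise the descending tail $\cdots \subset C_2 \subset C_1$ is strict at every stage, so the descending chain condition fails. Since Definition~\ref{def-noetherian} defines Noetherian precisely as the descending chain condition on closed sets, this last failure already yields that $\M$ is non-Noetherian. I do not expect a real obstacle here: the only point needing genuine verification is $C_1 = E_1$, which is immediate once the two avoidance conditions are phrased as complementary statements about pairs of vertices, and the strictness of all inclusions is already provided by the two remarks ($C_N \neq C_{N+1}$ and $E_N \neq E_{N+1}$).
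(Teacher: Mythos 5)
Your proposal is correct and follows essentially the same route as the paper, which likewise identifies $C_1 = E_1$ via the observation (already made in Remark~\ref{rmk-mutation-N-abundant}) that $C_1$ is exactly the set of mutation classes avoiding the rank-two arrowless quiver $I_2$, and then splices the strictly descending chain of the $C_N$ with the strictly ascending chain of the $E_N$ at that common term. Your unwinding of the two avoidance conditions and the final reading-off of the failure of both chain conditions matches the paper's argument, so there is nothing to add.
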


\section{Skew-symmetrizable mutation class topology} \label{sec-iced}

We can extend the construction in Section~\ref{sec-mut} to the set of mutation classes of \textit{skew-symmetrizable matrices}, possibly with frozen indices.
This is the largest mutation class topology that realizes our original quiver mutation class topology as a subspace topology.
In other words, the space described in this section is the largest mutation class Alexandrov space, and all others (including $\M$) are subspaces of this one.

\begin{Def} \label{def-ext-skew-sym}
    If $n \in \mathbb{Z}_{>0}$ and $m \in \mathbb{Z}_{\geq 0}$, then an $(n+m) \times (n+m)$ integer matrix $\widetilde{B} = (b_{ij})$ is a \textit{skew-symmetrizable matrix} if it differs from a skew-symmetric matrix by a positive (integer) rescaling of its rows; in other words, $d_ib_{ij} = - d_jb_{ji}$ for some positive integers $d_1, d_2, \dots, d_{n+m}$. 
    The first $n$ indices of $\widetilde{B}$ are the \textit{mutable} indices, and the remaining $m$ indices of $\widetilde{B}$ are the \textit{frozen} indices. 
\end{Def}

The subset of skew-symmetric integral matrices within the skew-symmetrizable integral matrices corresponds to ice quivers $Q$.
These are quivers with $(n+m)$ vertices, $n$ of which are \textit{mutable} vertices with the remaining $m$ vertices being \textit{frozen}.
For convenience, we label the vertices of $Q$ so that the mutable vertices are labeled $1, 2, \dots, n$, and the frozen vertices are labeled $n+1, \dots, n+m$.

We can also define mutation on skew-symmetrizable matrices, which agrees with the definition of quiver mutation when $\widetilde{B}$ is a skew-symmetric matrix.

\begin{Def}[\cite{fomin_introduction_2021-1}] \label{def-matrix-mut} 
    Let $k \in [n]$ be a mutable index of an $(n+m) \times (n+m)$ skew-symmetrizable matrix $\widetilde{B}$.
    Then the mutation of $\widetilde{B}$ at the index $k$ is the skew-symmetrizable matrix $\mu_k(\widetilde{B}) = (b'_{ij})$ where \[ b'_{ij} = 
    \begin{dcases}
        -b_{ij} & \text{ if } i = k \text{ or } j = k \\
        b_{ij} + b_{ik}[b_{kj}]_+ + [-b_{ik}]_+b_{kj} & \text{ if } i,j \neq k
    \end{dcases}\]
    for $1 \leq i,j \leq (n+m)$, and $[a]_+ = max(a,0)$.
    If $\widetilde{B}$ is a skew-symmetrizable matrix, then the \textit{mutation-class} of $\widetilde{B}$ is the set $[\widetilde{B}]$ consisting of all skew-symmetrizable matrices which can be obtained from $\widetilde{B}$ by a sequence of mutations at mutable indices and a possible relabelling of indices.
\end{Def}

\begin{Rmk}
As for interpretation and connections to existing cluster algebra literature, there is a subset of the mutation classes of skew-symmetrizable integral matrices that has been well studied.
If we consider the subset of our mutation classes of skew-symmetrizable $(n+m) \times (n+m)$ integral matrices $[\widetilde{B}]$ to the mutation classes that contain at least one matrix of the form \[[\widetilde{B}] \ni \widetilde{B}_0 = \begin{pmatrix}
    B_{n\times n} & A_{n\times m} \\
    C_{m\times n} & 0_{m\times m}
\end{pmatrix},\] then these are one of the presentations of \textit{cluster patterns of geometric type} \cite[Proposition 3.22]{nakanishi_cluster_2023}.
Here, the $m$ frozen indices are interpreted as generators in the tropical semifield Trop($u_1, \dots, u_m$), and the lower $m \times n$ matrix of each $\widetilde{B}' \in [\widetilde{B}]$ is the $C$-matrix at that seed.
Therefore, if we are careful, what we are describing is a poset structure and its induced Alexandrov topology on the space of \textit{cluster algebras of geometric type}.
\end{Rmk}

The next definitions are the counterparts of restriction and embedding from Section~\ref{sec-pre}.

\begin{Def} \label{def-restriction}
    Let $\widetilde{B}$ be an $(n+m) \times (n+m)$ skew-symmetrizable matrix.
    If $I$ is a $k$ element subset of the indices, then the skew-symmetrizable matrix $\widetilde{B}_I$ with row set $I$ and column set $I$ is the \textit{restriction} of $\widetilde{B}$ to $I$.
\end{Def}

\begin{Def} \label{def-matrix-embedding}
    We say that a mutation class $[\widetilde{B}]$ \textit{embeds} into a mutation class $[\widetilde{E}]$ and write $[\widetilde{B}] \preceq [\widetilde{E}]$ if there exists skew-symmetrizable matrices $\widetilde{P} \in [\widetilde{B}]$ and $\widetilde{Q} \in [\widetilde{E}]$ such that $\widetilde{P} = \widetilde{Q}_I$ for some set of indices $I$, possibly after a relabelling of indices. 
    In other words, $\widetilde{P}$ is a restriction of $\widetilde{Q}$.
    A property $\mathcal{H}$ is \textit{mutation-invariant and hereditary} if it is preserved under embedding of mutation classes.
\end{Def}

Last but not least, we have an analogous definition for the mutation class topology.

\begin{Def} \label{def-mutation-topology-ext}
    Let $\widetilde{\M}$ be the set of all mutation classes of $(n+m) \times (n+m)$ skew-symmetrizable integral matrices up to permutation of the labeling of indices.
    The \textit{mutation class topology} on $\widetilde{\M}$ is formed by taking the Alexandrov topology generated by $\preceq$, i.e., the closed sets of $\widetilde{\M}$ are the sets $S$ such that $[\widetilde{E}] \in S$ implies that $[\widetilde{B}] \in S$ for all $[\widetilde{B}] \preceq [\widetilde{E}]$.
\end{Def}

The subspace $\M \subset \widetilde{\M}$ of the mutation classes of quivers together with the subspace topology agrees with the topology on $\mathcal{M}$ defined in Section~\ref{sec-mut}.
As before, the closed sets of $\widetilde{\M}$ are in bijective correspondence with mutation-invariant and hereditary properties, and this topology enjoys all of the same properties from Section~\ref{sec-mut} as $\M$.

There are other subspaces of $\widetilde{\M}$ that might be of interest. 
For example, the ``skew-symmetric'' subspace of $\widetilde{\M}$ is the space of ice quivers: quivers with mutable and frozen vertices.
Other options for interesting subspaces are any closed sets of $\widetilde{\M}$: the set of mutation-acyclic skew-symmetric matrices $\MA$ discussed in Section~\ref{sec-pre}, the set of quivers admitting a reddening sequence $\MRed$, the set of $N$-abundant quivers, or the set of skew-symmetrizable matrices with exactly one frozen index are all subspaces of $\widetilde{\M}$ that can be endowed with the subspace topology.
This may bear further exploration.

\section{Open questions} \label{sec-open}

We end this paper with a few possible directions for future research. 
The problems are stated for $\M$, but they could easily be extended to $\widetilde{\M}$; this is analogous to open conjectures in cluster algebra theory which are often first proven for the skew-symmetric case, and then more generally for the skew-symmetrizable case or extended skew-symmetrizable case.

\begin{Open}
    If possible, describe mutation-acyclicity as an intersection of a collection of distinct mutation-invariant and hereditary properties (which are not themselves mutation-acyclicity).
\end{Open}

There is a corresponding definition of acyclic for skew-symmetrizable matrices \cite[Definition 5.11]{nakanishi_cluster_2023} so that we can extend this problem to $\widetilde{\M}$.
If mutation-acyclicity can be decomposed into an intersection of other mutation-invariant and hereditary properties, it may allow for a combinatorial proof that mutation-acyclicity is a hereditary property.

There is also research that can be done involving the closure operator. 

\begin{Open} \label{open-closure}
    If $A$ is the (non-open) set of all mutation classes with a certain mutation-invariant property (not necessarily hereditary), what mutation-invariant and hereditary property corresponds to $\overline{A}$? 
    Can we describe $\overline{A}$ in terms of the original mutation-invariant property?
\end{Open}

\begin{Open} \label{open-banff-louise}
    There are two sets of mutation classes of quivers, Banff and Louise, which are important sets with connections to the study of cluster algebras.
    It is also known that every Louise quiver is Banff and neither of these properties are hereditary.
    What is the closure of Louise? of Banff?
    Are the closures of Louise and Banff distinct closed sets?
\end{Open}

This gives a new technique to deal with OPAC-033 \cite{bucher_banff_2021,ervin_answering_2023} which is concerned with the relationship between Banff and Louise quivers. 
If we understood the closure of these two sets of mutation classes in our topology, we might be able to speak to the equality of the Banff and Louise mutation classes.

We also know that every Banff quiver admits a reddening sequence \cite{bucher_reddening_2020} and that the cluster algebras corresponding to Banff quivers have their upper cluster algebra equal to the cluster algebra ($\mathcal{A} = \mathcal{U}$) \cite{muller_locally_2013}.
If the closure of the Banff quivers is $\MRed$, does that imply that every quiver that admits a reddening sequence has $\mathcal{A} = \mathcal{U}$? 
This could provide an avenue to tackle the conjectured relation between reddening sequences and $\mathcal{A} = \mathcal{U}$ \cite[Conjecture 2]{mills_relationship_2018}.

\begin{Open}
    What is a nice subset whose closure is a given mutation-invariant and hereditary property?
    For example, what is a set whose closure produces all quivers with a reddening sequence and has a minimal number of quivers of any given rank?
    All quivers that are mutation-acyclic?
\end{Open}

These nice subsets (if they exist) would also shed light on the combinatorics of reddening sequences and mutation-acyclicity; this is similar to a question of Bucher and Machacek \cite[Question 3.7]{bucher_reddening_2020}.
For example, suppose a nice subset of quivers whose closure is $\MA$ is a proper subset of $\MA$. In that case, we could direct our study of the combinatorics of mutation-acyclicity to a more focused set of quivers. 
Similarly, if a generating set for $\MRed$ is a proper subset of $\MRed$, we can look to this subset for possible insights into the nature of reddening sequences.

We think it is compelling that so many outstanding questions in cluster algebras can be translated into questions about the mutation class topology. 
It is also worth repeating that the closed sets we have focused on throughout the paper are down sets in a poset.
Therefore it is likely that experts on posets, order ideals, and filter ideals would have something meaningful to add to our construction, and we look forward to seeing those additions and extensions to the theory of this paper.

\subsection*{Acknowledgements}

We would like to thank Kyungyong Lee for his advice and discussions on the topic.
We would also like to thank Ralf Schiffler for his helpful suggestions.
Finally, we would like to thank Ulysses Alvarez and Scott Neville for their comments and critiques of an earlier draft, which significantly improved the paper. 

\bibliography{references}
\bibliographystyle{plain}

\end{document}